\definecolor{wrwrwr}{rgb}{0.3803921568627451,0.3803921568627451,0.3803921568627451}
\newcommand{\abs}[1]{\left\vert{#1}\right\vert}
\newtheorem{theorem}{Theorem}
\newtheorem{claim}[theorem]{Claim}
\newtheorem{conjecture}{Conjecture}
\def\C{\mathcal C}
\def\P{\mathcal P}
\def\F{\mathcal F}
\newcommand{\ex}{{\rm  ex}}
\title{The maximum number of cliques in graphs with bounded odd circumference}
\author[1,3]{Zequn Lv}
\author[1]{Ervin Győri}
\author[1,3]{Zhen He}
\author[1]{Nika Salia}
\author[1,4]{Chuanqi Xiao}
\author[1,2]{Xiutao Zhu}
\date{}
\affil[1]{Alfr\'ed R\'enyi Institute of Mathematics, Hungarian Academy of Sciences. }
\affil[2]{Department of Mathematics, Nanjing University.}
\affil[3]{Department of Mathematical Sciences, Tsinghua University.}
\affil[4]{Department of Computer Science and Information Theory, Budapest University of Technology and Economics.}
\begin{document}

\maketitle
\begin{abstract}
In this work, we give the sharp upper bound for the number of cliques in graphs with bounded odd circumferences.  
This generalized Turán-type  result is an  extension of the celebrated Erd\H{o}s and Gallai theorem and a strengthening of Luo's recent result. 
The same bound for graphs with bounded even circumferences is a trivial application of the theorem of Li and Ning.
\end{abstract}

\section{Introduction}

A central topic of extremal combinatorics  is to investigate sufficient conditions for  the appearance of given substructures.
In particular, for a given graph $H$ and a set of graph $\F$, the generalized Tur\'an number $\ex(n,H,\F)$ denotes the maximum number of copies of $H$ in a graph on $n$ vertices containing no $F$ as a subgraph, for every $F\in \F$. 
In 1959, Erd\H{o}s and Gallai \cite{erdHos1959maximal} determined the maximum number of edges in a graph with a small circumference. 
For every integers $n$ and $k$ such that $n \geq k \geq 3$, they proved 
\begin{equation*}\label{equation:Erdos_gallai_paths}
\ex(n,K_2,\C_{\geq k}) \leq \frac{(k - 1)(n - 1)}{2},    
\end{equation*} 
where $K_k$ denotes the complete graph with $k$ vertices and $\C_{\geq k}$ denotes the family of cycles of length at least $k$. 
The bound is sharp for every $n$ congruent to one modulo $k-2$.  The equality is attained by graphs with  $\frac{n-1}{k-2}$ maximal $2$-connected blocks each isomorphic to $K_{k-1}$. 
Recently Li and Ning~\cite{li2021eigenvalues} proved that in order to obtain the same upper-bound it is enough to forbid only long even cycles 
\begin{equation}\label{Equation:Li_Ning_edges_no_even_cycle}
\ex(n,K_2,\C^{even}_{\geq 2k}) \leq \frac{(2k - 1)(n - 1)}{2},
\end{equation}
where $\C^{even}_{\geq 2k}$ denotes the family of even cycles of length at least $2k$,  that is $\{C_{2k},C_{2k+2},\dots\}$. We also denote the family of odd cycles of length at least $2k+1$ by $\C^{odd}_{\geq 2k+1}:=\{C_{2k+1},C_{2k+3},\dots\}$.

Note that by considering the complete balanced bipartite graph, one can not achieve a linear bound for edges in graphs with a small odd circumference. On the other hand, Voss and Zuluga~\cite{voss1977maximale} proved that every $2$-connected graph $G$ with minimum degree at least   $k\geq3$, with at least $2k+1$ vertices, contains an even cycle of length at least $2k$. Even more, if $G$ is not bipartite then it contains an odd cycle of length at least $2k-1$.

The celebrated Erd\H{o}s and Gallai theorem is well studied and well understood. There are numerous papers strengthening, generalizing, and extending for different classes of graphs. Recently Luo~\cite{luo2018maximum} proved 
\begin{equation}\label{equation:Luo}
\ex(n,K_r,\C_{\geq k})\leq \frac{(n-1)}{k-2}\binom{k-1}{r},    
\end{equation}
for all $n\geq k \geq 4$. This bound is a great tool for obtaining results in hypergraph theory. In order to highlight its significance, that bound was consequently reproved with different methods multiple times~\cite{ning2020extensions, zhu2022stability}. 
In this paper, we strengthen Luo's theorem. In particular, we obtain the tight same bounds for graphs with bounded odd circumferences. 
On the other hand, the result for graphs with bounded even circumference is trivial after applying Equation~\ref{Equation:Li_Ning_edges_no_even_cycle} and the following easy corollary of Equation~\ref{equation:Luo},
\[
\ex(n,K_r,P_{k+1}) \leq \frac{n}{k}\binom{k}{r},    
\]
where $P_{k+1}$ denotes the path of length $k$.
Since the graph $G$ does not contain a cycle of length $2k$, for each vertex the neighborhood contains no path of length $2k-2$.  
In particular, for all $r\geq 3$ we have
\[
\ex(n,K_r,\C^{even}_{\geq 2k})\leq \frac{1}{r}\sum_{v}\frac{d(v)}{(2k-2)}\binom{2k-2}{r-1}\leq \frac{1}{r}\frac{ex(n,K_{2},\C^{even}_{\geq 2k})}{(k-1)}\binom{2k-2}{r-1}
\leq \frac{n-1}{2k-2}\binom{2k-1}{r}.
\]
 For graphs with small odd circumferences, we have the following theorem.

\begin{theorem}\label{thm:K_r_odd-free}
For integers $n,k,r$ satisfying $n\geq 2k \geq r$ and $k\geq 6$, 
\[
\ex(n,K_r,\C^{odd}_{\geq 2k+1})\leq \frac{n-1}{2k-1}\binom{2k}{r}.
\]
The equality holds if and only if $n-1$ is divisible by $2k-1$ for a connected $n$-vertex graph which consists of $\frac{n-1}{2k-1}$ maximal $2$-connected blocks isomorphic to $K_{2k}$. 
\end{theorem}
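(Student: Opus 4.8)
The plan is to combine two ingredients: a structural decomposition of $C^{odd}_{\geq 2k+1}$-free graphs into pieces that are either bipartite or have bounded odd circumference \emph{and} bounded order, and then count cliques ($K_r$ with $r\geq 3$) only in the non-bipartite pieces, bounding the contribution of each piece by a local version of Luo's inequality. The key observation is that $K_r$ for $r\geq 3$ contains a triangle, so no copy of $K_r$ lives inside a bipartite subgraph; this is exactly what makes the odd-girth restriction usable even though one cannot bound the number of edges.

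\medskip
\noindent\textbf{Step 1: Reduce to $2$-connected blocks.} First I would pass to the block decomposition of $G$. Every copy of $K_r$ (being $2$-connected for $r\geq 3$) lies inside a single block, and the claimed extremal bound $\frac{n-1}{2k-1}\binom{2k}{r}$ is additive over a tree-like arrangement of blocks in the standard way (a connected graph on $n$ vertices with $b$ blocks $B_1,\dots,B_b$ of orders $n_1,\dots,n_b$ satisfies $\sum (n_i-1) = n-1$). So it suffices to prove: for every $2$-connected graph $B$ on $m$ vertices with no odd cycle of length $\geq 2k+1$,
\[
\#\{K_r \text{ in } B\} \leq \frac{m-1}{2k-1}\binom{2k}{r},
\]
with equality only if $B\cong K_{2k}$ (so $m=2k$). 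The bipartite blocks contribute $0$ to the left side for $r\geq 3$ while contributing a positive amount $\frac{m-1}{2k-1}\binom{2k}{r}$ to the right side, so they are harmless; the real work is with non-bipartite $2$-connected blocks.

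\medskip
\noindent\textbf{Step 2: Bound the order of a non-bipartite $2$-connected block.} This is where I expect the main obstacle. I would use the Voss--Zuluga-type circumference theorem quoted in the introduction (or a direct Bondy-type rotation/extension argument): in a $2$-connected non-bipartite graph, long odd cycles are forced unless the minimum degree is small. Concretely, if $B$ is $2$-connected, non-bipartite, $C^{odd}_{\geq 2k+1}$-free, then I want to show either $|V(B)| \leq 2k$, or $B$ has a vertex of degree $< k$ whose neighborhood therefore cannot contain a path on $2k-2$ vertices — allowing an induction on $n$ after deleting that vertex (its contribution to $K_r$-count is at most $\binom{\,\text{something}<k\,}{r-1}$, which must be checked to fit). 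The delicate point is handling blocks with $|V(B)| > 2k$ but no low-degree vertex: here the non-bipartiteness plus $2$-connectivity plus a minimum-degree bound of roughly $k$ should force, via cycle-rotation along a longest path together with an odd closed walk, an odd cycle of length at least $2k+1$, a contradiction. I would set this up as a lemma proved by choosing a longest path $P$, analyzing the endpoints' neighborhoods on $P$, and using a shortest odd cycle to ``flip parity'' of a long segment.

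\medskip
\noindent\textbf{Step 3: Local Luo inequality and the equality case.} For a single non-bipartite $2$-connected block with $m\leq 2k$ vertices I need $\#\{K_r \text{ in } B\}\leq \frac{m-1}{2k-1}\binom{2k}{r}$. When $m\leq 2k$ the trivial bound $\binom{m}{r}$ already competes with the target, and one checks $\binom{m}{r}\leq \frac{m-1}{2k-1}\binom{2k}{r}$ holds for $3\leq r\leq m\leq 2k$ (equivalently $\frac{\binom{m}{r}}{m-1}$ is nondecreasing in $m$, a short convexity/ratio computation using $k\geq 6$), with equality iff $m=2k$, i.e. $B=K_{2k}$. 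Alternatively, if the induction in Step 2 instead peels off a low-degree vertex, I apply Luo's neighborhood argument: the neighborhood of $v$ with $d(v)=d<k$ spans no $P_{2k-2}$, so it contains at most $\frac{d}{2k-2}\binom{2k-2}{r-1}$ copies of $K_{r-1}$, hence $v$ lies in at most that many $K_r$'s, and one checks this is $\leq \frac{1}{2k-1}\binom{2k}{r}$ so that induction on $n$ closes. Assembling: sum over all blocks, bipartite ones give $0\leq$ their allowance, non-bipartite ones are handled by Steps 2--3, and $\sum(n_i-1)=n-1$ gives the global bound; tracing equality back forces every block to be $K_{2k}$ and $2k-1\mid n-1$, which is exactly the stated characterization.

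\medskip
The main obstacle, to repeat, is Step 2 — the purely structural claim that a $2$-connected non-bipartite graph which is either large or has all degrees $\geq k$ must contain an odd cycle of length $\geq 2k+1$. Everything else is bookkeeping and elementary binomial estimates; this is the step where I would most carefully search for (or adapt) an existing longest-cycle/longest-path argument in the spirit of Erd\H{o}s--Gallai, Voss--Zuluga, or Li--Ning.
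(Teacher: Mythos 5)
Your Step 2, which you yourself identify as the crux, is where the proposal breaks: as stated it is false. Take $K_{k,N}$ with parts $A$, $|A|=k$, and $B$, $|B|=N>k$, and add one edge $a_1a_2$ inside $A$. This graph is $2$-connected, non-bipartite, has minimum degree exactly $k$ (so no vertex of degree $<k$), has $k+N>2k$ vertices, and every odd cycle must use the edge $a_1a_2$ together with an even $a_1$--$a_2$ path in $K_{k,N}$, hence has length at most $2(k-1)+1=2k-1<2k+1$. So the dichotomy ``$|V(B)|\leq 2k$ or some vertex of degree $<k$'' fails, and this is consistent with the Voss--Zuluaga theorem you invoke, which at minimum degree $k$ only guarantees an odd cycle of length $2k-1$ --- exactly one step short of what you need. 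The lemma can be repaired by shifting the threshold: if $\delta(B)\geq k+1$, then Voss--Zuluaga (applied with parameter $k+1$) gives an odd cycle of length $\geq 2k+1$ once $|V(B)|\geq 2k+3$, and the two remaining orders $2k+1,2k+2$ are killed by Dirac/Bondy (with $\delta\geq k+1\geq |V(B)|/2$ the block is pancyclic unless it is $K_{k+1,k+1}$, which is bipartite). But none of this is in your write-up --- you only assert that a rotation argument ``should force'' the long odd cycle --- and the subsequent bookkeeping you defer is not automatic either: after deleting a low-degree vertex the block structure changes, and a cut vertex can have much smaller degree inside a block than in $G$, so ``$\delta(G)$ large'' does not directly give ``$\delta(B)$ large'' for every block; you must argue via end blocks or run the degree-deletion step in $G$ itself, and verify $\binom{k}{r-1}\leq\frac{1}{2k-1}\binom{2k}{r}$ and the equality tracing.

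It is worth noting that your route is genuinely different from the paper's, which never splits into bipartite versus non-bipartite blocks and never uses Voss--Zuluaga: instead it exploits the clique-counting hypothesis directly by assuming every edge lies in a $K_r$ (so every edge is in a triangle), and runs a longest-path rotation argument (maximal chord $v_1v_t$) to show $t\leq 2k$ and that $v_t$ is a cut vertex, which yields a small end block and closes the induction. That extra local non-bipartiteness (a common neighbour of two consecutive cycle vertices) is precisely what substitutes for the purely structural odd-circumference lemma your plan needs; your Step 1 and Step 3 (blocks, the monotonicity of $\binom{m}{r}/(m-1)$ for $r\geq 3$, and the equality analysis) are fine, but the proposal as it stands has a concrete counterexample at its central claim and no proof of the corrected version.
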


It is interesting to ask for which integers and for which congruence classes the same phenomenon still holds. 
In this direction, we would like to rise a modest conjecture.
\begin{conjecture}
    For an integer $k\geq 2$ and a sufficiently large $n$. Let $G$ be an $n$ vertex $C_{3\ell+1}$-free graph for every integer $\ell \geq k $. Then for every $r\geq 2$, the number of cliques of size $r$ in $G$ is at most
\[
 \frac{n-1}{3k-1}\binom{3k}{r}.
\]
The equality holds if and only if $n-1$ is divisible by $3k-1$ for a connected $n$-vertex graph which consists of $\frac{n-1}{3k-1}$ maximal $2$-connected blocks isomorphic to $K_{3k}$. 
\end{conjecture}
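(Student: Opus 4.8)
The plan is to reduce the clique count to an edge count by a neighbourhood (``fan'') argument, and then to establish that edge bound by a block decomposition combined with a Kopylov/Erd\H{o}s--Gallai--type analysis of the longest cycle. Write $N(K_r,G)$ for the number of copies of $K_r$ in $G$, set $\F=\{C_{3\ell+1}:\ell\ge k\}$, and let $G$ be an $n$-vertex $\F$-free graph. First I would record the local consequence of $\F$-freeness. Fix a vertex $v$; if $G[N(v)]$ contained a path $x_0x_1\cdots x_{3k-1}$ on $3k$ vertices, then since $v$ is adjacent to every $x_i$, the fan over this path realizes cycles $vx_ix_{i+1}\cdots x_jv$ of \emph{every} length in $\{3,4,\dots,3k+1\}$; in particular it realizes $C_{3k+1}\in\F$, a contradiction. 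Hence $G[N(v)]$ is $P_{3k}$-free for every $v$. Applying the path version of Luo's bound quoted in the introduction (with the path on $3k$ vertices, i.e. $P_{(3k-1)+1}$) gives
\[
N\!\left(K_{r-1},G[N(v)]\right)\ \le\ \frac{d(v)}{3k-1}\binom{3k-1}{r-1}.
\]
Summing over $v$ and using $r\,N(K_r,G)=\sum_v N(K_{r-1},G[N(v)])$ yields
\[
N(K_r,G)\ \le\ \frac{\binom{3k-1}{r-1}}{r(3k-1)}\sum_v d(v)\ =\ \frac{\binom{3k-1}{r-1}}{r(3k-1)}\,2\,\lvert E(G)\rvert .
\]

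The next step is the edge bound, which is both the case $r=2$ and the genuine content: every $\F$-free graph satisfies $\lvert E(G)\rvert\le \tfrac{3k}{2}(n-1)$. Granting this, the displayed inequality together with the identity $\tfrac{3k}{r}\binom{3k-1}{r-1}=\binom{3k}{r}$ gives $N(K_r,G)\le \frac{n-1}{3k-1}\binom{3k}{r}$ for every $r\ge 2$, which is exactly the asserted bound. To prove the edge bound I would pass to the block decomposition: since every edge and every clique lies inside a single block and $\sum_B(\lvert B\rvert-1)=n-1$ for connected $G$ (and strictly less otherwise), it suffices to prove $\lvert E(B)\rvert\le \tfrac{3k}{2}(\lvert B\rvert-1)$ for each $2$-connected $\F$-free block $B$. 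If $B$ has circumference at most $3k$ — i.e. no cycle of length $\ge 3k+1$ at all — then the classical Erd\H{o}s--Gallai circumference bound gives precisely $\lvert E(B)\rvert\le \tfrac{3k}{2}(\lvert B\rvert-1)$, so this case is immediate.

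The main obstacle is the remaining case, where $B$ is $2$-connected and $\F$-free yet contains a cycle of length $\ge 3k+2$ (recall $C_{3k+1}$ is itself forbidden). Here the circumference is \emph{unbounded} — a chordless long cycle of length $\equiv 0$ or $2\pmod 3$ is perfectly admissible — so Erd\H{o}s--Gallai does not apply and I must show instead that such a block is still sparse. The approach is a residue-class rerouting argument: fixing a longest cycle $C$ of length $c\ge 3k+2$, every chord with cyclic arc-lengths $a,b$ (so $a+b=c$) creates cycles of lengths $a+1$ and $b+1$, and rerouting through several chords or through ears attached off $C$ shifts the set of attainable cycle lengths. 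One wants to show that a block denser than $\tfrac{3k}{2}(\lvert B\rvert-1)$ must realize cycle lengths covering all three residues modulo $3$ within some window above $3k$, hence a length $\equiv 1\pmod 3$, contradicting $\F$-freeness. Making this quantitative — controlling cycle lengths \emph{modulo $3$}, rather than merely modulo $2$ as in the odd-circumference theorem — is exactly where the difficulty concentrates; I expect to need a cycle-spectrum input of Bondy--Vince / Liu--Ma type (many cycles of consecutive lengths forced by large minimum degree) together with a Kopylov-style reduction to an extremal ``hub'' configuration, both of which plausibly require $n$ (and hence large blocks) to be large, matching the hypothesis of the conjecture.

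Finally, for the equality characterization I would trace the two inequalities backwards. Equality in the edge bound forces every block to be extremal, which the $2$-connected analysis should identify as $K_{3k}$ (forcing connectivity and $3k-1\mid n-1$); and for $r\ge 3$ equality additionally forces each $G[N(v)]$ to be the extremal $P_{3k}$-free clique-maximizer, namely a disjoint union of copies of $K_{3k-1}$, which is consistent with, and reinforces, the all-$K_{3k}$ block structure.
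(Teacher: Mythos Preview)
The statement you are addressing is \emph{Conjecture~1} in the paper, not a theorem; the authors offer no proof whatsoever and pose it as an open problem immediately after Theorem~\ref{thm:K_r_odd-free}. There is therefore no ``paper's proof'' to compare your proposal against.

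That said, your outline is a reasonable plan of attack, and you are candid about where it stalls. The neighbourhood/fan reduction is sound: $C_{3k+1}$-freeness alone forces $G[N(v)]$ to be $P_{3k}$-free, and the double count via Luo's path bound cleanly reduces the case $r\ge 3$ to the edge bound $r=2$, with the identity $\tfrac{3k}{r}\binom{3k-1}{r-1}=\binom{3k}{r}$ doing the bookkeeping. The genuine gap is exactly where you place it: showing $\lvert E(B)\rvert\le \tfrac{3k}{2}(\lvert B\rvert-1)$ for a $2$-connected $\F$-free block $B$ whose circumference exceeds $3k$. Here Erd\H{o}s--Gallai is unavailable, and your rerouting/cycle-spectrum sketch is suggestive but not a proof; controlling attainable cycle lengths modulo~$3$ (rather than modulo~$2$, as in Theorem~\ref{thm:K_r_odd-free}) is precisely what makes this conjecture harder, and no off-the-shelf Bondy--Vince or Liu--Ma statement delivers the required density-to-residue-class implication. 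So your write-up should be read as a strategy rather than a proof --- which, given that the paper itself only conjectures the result, is an accurate status.
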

Moreover, we would like to propose the following conjecture. 
Let $p$ be a prime number and $\C_{\geq p}^{prime}$ be the family of all cycles of length at least $p$.
\begin{conjecture}
For integers $n$, $r$ and a prime $p$ satisfying $r<p$, we have  
\[
 ex(n,K_r,\C_{\geq p}^{prime})\leq \frac{n-1}{p-2}\binom{p-1}{r}.
\]
The equality holds if and only if $n-1$ is divisible by $p-2$, for a connected $n$-vertex graph which consists of $\frac{n-1}{p-2}$ maximal $2$-connected blocks isomorphic to $K_{p-1}$. 
\end{conjecture}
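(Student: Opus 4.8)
The plan is to reduce the conjecture to Theorem~\ref{thm:K_r_odd-free} by controlling the \emph{odd} circumference of the $2$-connected blocks, treating the number theory as the one genuinely new ingredient. Throughout I take $r\ge 3$; for $r=2$ a complete balanced bipartite graph has no odd cycle and hence no cycle of prime length, so — just as in the motivation preceding Theorem~\ref{thm:K_r_odd-free} — no linear clique bound can hold and the statement must be read with $r\ge3$. The decisive observation is arithmetic: since $p\ge5$ is prime it is odd, so a cycle of prime length at least $p$ is an \emph{odd} cycle of length at least $p$, and writing $p=2k+1$ (so $k=(p-1)/2$) the target bound $\frac{n-1}{p-2}\binom{p-1}{r}$ is exactly the bound $\frac{n-1}{2k-1}\binom{2k}{r}$ of Theorem~\ref{thm:K_r_odd-free}, with the same divisibility and the same $K_{p-1}=K_{2k}$ extremal blocks. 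The constraint $r<p$ matches $r\le 2k$, and $k\ge6$ holds once $p\ge13$.

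First I would set up the standard block machinery. Every copy of $K_r$ lies in a unique maximal $2$-connected block, and for a connected graph the block--cut tree gives $\sum_i(c_i-1)=n-1$, where $c_i$ is the order of the $i$-th block; bipartite blocks contribute nothing to the count (as $r\ge3$) while still contributing positively to $\sum_i(c_i-1)$, so they only help. It therefore suffices to prove, for every $2$-connected non-bipartite $\C_{\geq p}^{prime}$-free block $B$ on $c$ vertices, the per-block inequality $n_r(B)\le\frac{c-1}{p-2}\binom{p-1}{r}$, where $n_r(B)$ denotes the number of copies of $K_r$ in $B$. If the odd circumference of $B$ is at most $p-2$, then $B$ contains no odd cycle of length $\ge p=2k+1$, and the per-block case of Theorem~\ref{thm:K_r_odd-free} (its proof localizes to $2$-connected blocks of bounded odd circumference) gives the bound immediately, together with the equality characterization $B\cong K_{p-1}$. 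So the entire problem is to rule out, or render harmless, blocks of odd circumference at least $p$.

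To handle those blocks I would combine a Kopylov/Luo-type densification with a cycle-spectrum theorem. Iteratively deleting a vertex whose $K_r$-degree is at most $\frac1{p-2}\binom{p-1}{r}$ preserves the inequality by induction (it decreases $n_r$ by at most $\frac1{p-2}\binom{p-1}{r}$ and $c-1$ by exactly $1$); the remaining case is a saturated core in which every vertex lies in more than $\frac1{p-2}\binom{p-1}{r}$ copies of $K_r$, forcing large minimum degree $\delta$. On such a $2$-connected, non-bipartite core, results of Bondy--Vince / Liu--Ma type furnish $\Omega(\delta)$ cycles of consecutive lengths, hence odd cycle lengths filling an interval of length $\Omega(\delta)$ up near the odd circumference. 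Since the odd circumference is at least $p$ and, by Bertrand's postulate together with unconditional bounds on prime gaps, every sufficiently long interval reaching above $p$ contains a prime, a realized odd length would be a prime $q\ge p$; the corresponding cycle $C_q$ is forbidden, a contradiction. Thus a saturated dense core cannot have odd circumference $\ge p$, odd circumference at most $p-2$ is forced, and we are back in the case settled by Theorem~\ref{thm:K_r_odd-free}.

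The main obstacle is precisely the reconciliation of scales in the last step. Current cycle-spectrum theorems produce an interval of consecutive lengths whose size is governed by the minimum degree $\delta$ of the core, whereas the prime we must hit lives near the odd circumference $L$, where the largest prime gap is controlled only as a function of $L$; one must therefore guarantee an interval that is simultaneously \emph{anchored near $L$} and \emph{longer than the maximal prime gap below $L$}, and it is here that the weakness of forbidding only prime lengths bites — the adversary is free to realize every \emph{composite} odd length and avoid every prime. I expect the resolution to require a tailored cycle-spectrum lemma that exploits these available composite-length cycles to interpolate, pushing a band of consecutive odd lengths up to $L$. Two further points need attention: the small primes $p\in\{5,7,11\}$, where $k=(p-1)/2<6$ falls outside the hypothesis of Theorem~\ref{thm:K_r_odd-free} and must be treated by a direct (finite-type) argument, and the upgrade of the per-block bound to the global equality statement, where one checks that a prime odd cycle is forced unless every block is $K_{p-1}$ and the blocks are glued in a tree-like chain with $(p-2)\mid(n-1)$.
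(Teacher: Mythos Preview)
The statement you are attacking is stated in the paper as a \emph{conjecture} and is left open there; the paper contains no proof of it, so there is nothing to compare your proposal against.

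As an attack on the open problem, your preliminary reductions are sound: the $r=2$ counterexample via complete bipartite graphs, the block decomposition with bipartite blocks discarded for $r\ge3$, and the identification $p=2k+1$ matching the target bound to that of Theorem~\ref{thm:K_r_odd-free} are all correct and useful. But the heart of the argument has a genuine gap, which you yourself name. The cycle-spectrum results you invoke (Bondy--Vince, Liu--Ma and descendants) produce an interval of consecutive cycle lengths whose width is governed by the minimum degree $\delta$ of the core; they do \emph{not} anchor that interval near the odd circumference $L$. Nothing in your setup bounds $L$ in terms of $\delta$ or $p$: a $2$-connected non-bipartite block can have $\delta$ of order $p$ while $L$ is an arbitrarily large composite odd number, and then an interval of width $O(\delta)=O(p)$ is far too short to meet any prime-gap bound near $L$ (even Baker--Harman--Pintz only gives gaps $O(L^{0.525})$). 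So the sentence ``a saturated dense core cannot have odd circumference $\ge p$'' is unsupported, and without it the reduction to Theorem~\ref{thm:K_r_odd-free} does not close. A secondary issue you pass over is that your densification step (iteratively deleting vertices of small $K_r$-degree) need not preserve $2$-connectedness, so the cycle-spectrum theorems, which require $2$-connectivity, may not even apply to the core you reach.
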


\section{Proof of the main result}
\begin{proof}
We prove Theorem~\ref{thm:K_r_odd-free} by induction on the number of vertices of the graph. 
The base cases for $n\leq 2k$ are trivial. Let $G$ be a graph on $n$ vertices where $n>2k$.  We assume that every $\C^{odd}_{\geq 2k+1}$-free graph on $m$ vertices, for some $m<n$, contains at most $\frac{m-1}{2k-1}\binom{2k}{r}$ copies of $K_r$ and the equality is achieved for the class of graphs described in the statement of the theorem.

If $\delta(G)$, the minimum degree of $G$, is at most $k+2$ then we are done by the induction hypothesis
\[
K_r(G)\leq K_r(G[V(G) \setminus \{v\}])+\binom{k+2}{r-1}\leq \frac{n-2}{2k-1}\binom{2k}{r}+\binom{k+2}{r-1}< \frac{n-1}{2k-1}\binom{2k}{r}, 
\]
since $k\geq 6$.
From here, we may assume $G$ is a graph with $\delta(G)>k+2$, and each edge of $G$ is in a copy of $K_r$. 

Let $v_1v_2v_3\dots v_m$ be a longest path of $G$ such that $v_1$ is adjacent to $v_t$ and $t$ is the maximum possible among the longest paths.

\begin{claim}\label{claim:Cut_vertex}
If $t\leq 2k$, then $v_t$ is a cut vertex. 
\end{claim}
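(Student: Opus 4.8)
The plan is to argue by contradiction: assume $t\le 2k$ but $v_t$ is not a cut vertex. We may assume $G$ is connected (otherwise work inside the component containing $P$), so $G-v_t$ is connected. First I would record the standard consequences of maximality: since $P$ is a longest path and $t$ is largest possible, $N(v_1)\subseteq\{v_2,\dots,v_t\}$ and symmetrically $N(v_m)\subseteq\{v_{m-t+1},\dots,v_{m-1}\}$, so in particular $t-1\ge\delta(G)>k+2$; moreover the same restriction applies to the endpoint of any longest path obtained from $P$ by rotations along the edge $v_1v_t$.

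The first real step is to locate a short ``crossing'' connection. In $G-v_t$ consider a shortest path $R$ between $A:=\{v_1,\dots,v_{t-1}\}$ and its complement $B:=V(G)\setminus(A\cup\{v_t\})=\{v_{t+1},\dots,v_m\}\cup\bigl(V(G)\setminus V(P)\bigr)$. Since $G-v_t$ is connected and $A\cup B=V(G)\setminus\{v_t\}$, the path $R$ can have no interior vertex, so it is a single edge $v_iw$ with $v_i\in A$ and either $w=v_\ell$ for some $\ell\ge t+1$ or $w\notin V(P)$. (If one keeps only $B=\{v_{t+1},\dots,v_m\}$, then $R$ may have interior vertices, but they all lie off $P$ and the construction below then immediately yields a path on more than $m$ vertices, a contradiction; so again we may take $R=v_iv_\ell$.)

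The decisive case is $v_i=v_{t-1}$. If $w\notin V(P)$, then $w\,v_{t-1}v_{t-2}\cdots v_1\,v_tv_{t+1}\cdots v_m$ (using $v_1v_t$) is a path on $m+1$ vertices, contradicting maximality of $P$. If $w=v_\ell$, then $Q:=v_mv_{m-1}\cdots v_{\ell+1}\,v_\ell\,v_{t-1}v_{t-2}\cdots v_1\,v_tv_{t+1}\cdots v_{\ell-1}$ is a longest path whose endpoint $v_{\ell-1}$ is adjacent on $P$ to $v_\ell$, while $v_\ell$ occupies position $\ell>t$ along $Q$ — contradicting maximality of $t$. So everything reduces to bringing a general $v_i$ into the role of $v_{t-1}$.

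To do this I would study the set $S$ of vertices $v_j\in\{v_1,\dots,v_{t-1}\}$ that occur as the free endpoint of a longest path in ``lollipop position'' $v_j=u_1,u_2,\dots,u_{t-1},v_t,v_{t+1},\dots,v_m$ with $\{u_1,\dots,u_{t-1}\}=\{v_1,\dots,v_{t-1}\}$; equivalently, $G[\{v_1,\dots,v_{t-1}\}]$ has a Hamilton path from $v_j$ to a neighbour of $v_t$. Since $P$ and its reverse both lie in $\mathcal Q$ we get $v_1,v_{t-1}\in S$, and applying maximality of $t$ to the endpoint of each such path shows $N(v_j)\subseteq\{v_1,\dots,v_t\}$ for every $v_j\in S$, hence $v_j$ has more than $k+1$ neighbours inside $\{v_1,\dots,v_{t-1}\}$. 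A rotation/P\'osa argument inside $G[\{v_1,\dots,v_{t-1}\}]$, using $\delta(G)>k+2$ and $|\{v_1,\dots,v_{t-1}\}|=t-1\le 2k-1$, makes $S$ large; if $S=\{v_1,\dots,v_{t-1}\}$ then no vertex of $\{v_1,\dots,v_{t-1}\}$ has a neighbour outside $\{v_1,\dots,v_t\}$, contradicting the crossing edge $v_iw$, while if $v_i\in S$ one repeats the two constructions of the previous paragraph with the backward walk $v_{t-1}\cdots v_1$ replaced by a Hamilton path of $G[\{v_1,\dots,v_{t-1}\}]$ from $v_i$ to the relevant neighbour of $v_t$, again producing a path on $m+1$ vertices or a longest path violating maximality of $t$. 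The main obstacle is precisely this last step — upgrading ``$S$ is large and rotation-closed'' to ``$v_i\in S$'' — and it is here that $k\ge 6$, the bound $t\le 2k$, and a careful (e.g.\ extremal) choice of the crossing edge are spent, so that the few vertices of $\{v_1,\dots,v_{t-1}\}$ not yet known to lie in $S$ have controlled degree inside $G[\{v_1,\dots,v_{t-1}\}]$ and can be absorbed.
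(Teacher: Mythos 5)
Your write-up is not a complete proof: the step you yourself flag as ``the main obstacle'' --- upgrading ``$S$ is large and rotation-closed'' to either $S=\{v_1,\dots,v_{t-1}\}$ or $v_i\in S$ --- is exactly where all the work lies, and you give no argument for it. A plain P\'osa rotation argument inside $G[\{v_1,\dots,v_{t-1}\}]$ only yields expansion-type information about the set of endpoints produced by rotations (roughly $|N(S')|\le 3|S'|$); with $\delta(G)>k+2$ and $t-1\le 2k-1$ this neither forces $S$ to exhaust $\{v_1,\dots,v_{t-1}\}$ nor lets you steer the crossing edge to a vertex of $S$, and the proposed ``absorption'' of the exceptional vertices via $k\ge 6$ and an extremal choice of crossing edge is asserted, not proved. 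The parts you do verify (the reduction to a single crossing edge, and the case $v_i=v_{t-1}$ with the rerouted path $Q$ violating the maximality of $t$) are correct, but they correspond only to the easy observations in the paper's proof, namely that $v_1,v_{t-1}$ lie in the endpoint set and that if that set is all of $\{v_1,\dots,v_{t-1}\}$ then $v_t$ is a cut vertex.

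The paper closes precisely the gap you left, by a quantitative extremal choice rather than by making the endpoint set ``large'', and it never needs the crossing edge. Writing each lollipop longest path as $u_1u_2\cdots u_{t-1}v_tv_{t+1}\cdots v_m$ on the vertex set of $P$ and letting $T_1$ (your $S$) be the set of free endpoints, one picks such a path for which the first index $r$ with $u_r\in T_1$ and $u_{r+1}\notin T_1$ is as small as possible. Two rotations then show that $u_1$ has no two consecutive neighbours among $u_{r+1},\dots,u_t$ and no neighbour at all among $u_{r+2},\dots,u_{2r+1}$: either adjacency would produce a lollipop path exhibiting a $T_1$-to-non-$T_1$ transition at an index smaller than $r$. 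Consequently $d(u_1)\le (r-1)+\tfrac{t-2r}{2}=\tfrac{t-2}{2}<k$, contradicting $\delta(G)>k+2$; hence $T_1=\{v_1,\dots,v_{t-1}\}$, every vertex of $\{v_1,\dots,v_{t-1}\}$ has all its neighbours in $\{v_1,\dots,v_t\}$ by the maximality of the path and of $t$, and since $n>2k\ge t$ the vertex $v_t$ is a cut vertex. Some concrete argument of this kind (an extremal choice plus an explicit degree bound) is what your proposal is missing.
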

\begin{proof}
Consider a family $\P$ contains all longest paths of $G$ on the vertex set $\{v_1,v_2,v_3,\dots,v_m\}$ such that $v_tv_{t+1}\dots v_m$ is a sub-path of them. Let  $T_1$ be  the set of terminal vertices, excluding $v_m$, of paths from $\P$.  If $T_1=\{v_1,v_2,\dots,v_{t-1}\}$, then by the maximality of $t$, the vertex $v_t$ is a cut vertex. 
Let the path $u_1u_2\cdots u_{t-1}u_tv_{t+1}\cdots v_m$ be a path from $\P$ such that $u_r\in T_1$ and $u_{r+1}\notin T_1$  minimizing $r$. 
Note that $u_t=v_t$ and $\{v_1,v_2,\dots,v_t\}=\{u_1,u_2,\dots,u_t\}$. 

By the minimality of $r$, the vertex $u_1$ is not adjacent to two consecutive vertices from $\{u_{r+1},u_{r+2},\dots,v_t\}$. Since if $v_1$ is adjacent to $u_i$ and $u_{i+1}$ for  $r+1\leq i \leq t$, then 
\[
u_2u_3\cdots u_ru_{r+1}\cdots u_iu_1u_{i+1}\cdots u_t v_{t+1}\cdots v_m 
\]
forms a path from  $\P$, contradicting the minimality of $r$. 

Note that $u_1$ is not adjacent to vertices $u_{r+2},u_{r+3},\dots ,u_{2r+1}$. 
Otherwise, suppose $u_1$ is adjacent to $u_i$ for $r+2\leq i\leq 2r+1$, then  we get a path from $\P$
\[
u_{i-1}u_{i-2}\dots u_{r+1}u_{r}\dots u_1u_{i}u_{i+1}\dots u_t v_{t+1} \dots v_{m},
\]
contradicting the minimality of $r$ since  $u_{r+1}\notin T_1$.

Finally we have $d(u_1)\leq (r-1)+\frac{t-(2r+1)+1}{2}=\frac{t-2}{2}<k$, a contradiction. \end{proof}

\begin{claim}\label{claim:t<2k}
We have $t\leq 2k$.
\end{claim}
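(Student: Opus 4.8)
The plan is to argue by contradiction: suppose $t \geq 2k+1$, so $v_1$ is adjacent to $v_t$ with $t$ as large as possible along a longest path. The first observation is that the cycle $C = v_1 v_2 \cdots v_t v_1$ has length $t \geq 2k+1$. If $t$ were odd we would already have an odd cycle of length at least $2k+1$, contradicting that $G$ is $\C^{odd}_{\geq 2k+1}$-free; so we may assume $t$ is even, say $t = 2\ell$ with $\ell \geq k+1$. The idea is then to use the high minimum degree $\delta(G) > k+2$ together with chords and ``rotations'' of this long cycle to produce an odd cycle of length at least $2k+1$. The standard tool here is that if $v_1$ has a neighbor $v_i$ and $v_j$ on the cycle with $i < j$, then we obtain two cycles whose lengths sum to $t+2$ (namely $v_1 v_i v_{i-1} \cdots v_1$-type pieces), and one of them has a prescribed parity; more generally, chords of an even cycle let us build odd cycles, and we want one that is long.

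The key steps, in order, would be: (1) Record that $t$ even, $t = 2\ell$. (2) Examine where the neighbors of $v_1$ (and, by the path-maximality set-up, of $v_t$) lie on the path $v_1 \cdots v_m$; by maximality of the path, all neighbors of $v_1$ lie in $\{v_2,\dots,v_m\}$, and by maximality of $t$, the neighbors of $v_1$ among $\{v_1,\dots,v_m\}$ actually all lie in $\{v_2,\dots,v_t\}$ (a neighbor $v_i$ with $i>t$ would, via the cycle $C$, give a longer path or a larger $t$). Hence $v_1$ has more than $k+2$ neighbors inside $\{v_2,\dots,v_t\}$. (3) Use a chord $v_1 v_i$ of the even cycle $C$: it splits $C$ into two paths between $v_1$ and $v_i$ of lengths $i-1$ and $t-i+1$, one even and one odd; combined appropriately this yields an odd closed walk, and we must ensure we can extract from it an odd \emph{cycle} of length $\geq 2k+1$. (4) The cleanest route: among the neighbors of $v_1$ on $C$, if any two are at an odd ``distance'' around $C$ that is large, we immediately get a long odd cycle; a counting/pigeonhole argument on $>k+2$ neighbors placed around a cycle of length $2\ell \geq 2k+2$ should force such a pair, because the neighbors cannot all be clustered (they also cannot be two consecutive vertices by a rotation argument analogous to the one in Claim~\ref{claim:Cut_vertex}). (5) Derive the contradiction.

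The main obstacle I expect is step (4): controlling simultaneously the \emph{parity} and the \emph{length} of the cycle extracted from the chords. It is easy to get a long cycle or an odd cycle, but getting both at once requires care — one likely needs to combine a chord $v_1 v_i$ with a second chord, or to use the neighborhood of $v_t$ as well, to ``fix'' the parity while only losing a bounded amount of length. A secondary obstacle is handling the case where the neighbors of $v_1$ on $C$ are spread out but all contribute cycles of the wrong parity; here one would argue that since $t \geq 2k+1$ and $\delta(G) > k+2$, some neighbor must be far enough from $v_1$ along $C$ in the odd direction — essentially a pigeonhole statement that at least one of the two arcs determined by a well-chosen neighbor is odd and has length $\geq 2k+1$. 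Once the parity/length bookkeeping is pinned down, the contradiction with $\C^{odd}_{\geq 2k+1}$-freeness is immediate, and together with Claim~\ref{claim:Cut_vertex} this will let the induction proceed by splitting $G$ at the cut vertex $v_t$.
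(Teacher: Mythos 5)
Your proposal has a genuine gap, and it is exactly at the step you flag as the ``main obstacle.'' The pigeonhole/rotation scheme in step (4), which uses only $\delta(G)>k+2$, the chords at $v_1$ (and possibly $v_{t-1}$), and parity bookkeeping on the even cycle $C$, cannot be completed, because the claim is simply false under those hypotheses alone. Take $K_{k+3,\,n-k-3}$: it has minimum degree at least $k+3$, it contains no odd cycle at all (so it is $\C^{odd}_{\geq 2k+1}$-free), and its longest path contains a cycle through $v_1$ and $v_t$ with $t=2k+6>2k$. More locally, a single chord $v_1v_i$ of the even cycle $C$ creates an odd cycle only when $i$ is odd, and $v_1$ could place all of its $\geq k+3$ neighbours at even positions of $C$ (as in the bipartite example), so no amount of pigeonholing on the neighbours of $v_1$, or on $N(v_1)$ together with $N(v_{t-1})$, forces a long odd cycle. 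Any correct proof must use an ingredient beyond degrees and odd-circumference, and your outline never identifies one.

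The ingredient the paper uses is the assumption (set up just before the claims) that every edge of $G$ lies in a copy of $K_r$, hence in a triangle. From this, the edge $v_{t-3}v_{t-2}$ has a common neighbour $v_\ell$, which parity and the absence of long odd cycles force to lie on $C$ with $c-1\leq \ell\leq 2k-4$, where $c=t-2k$. The paper then does not hunt for a long odd cycle directly; instead it collects three disjointness facts --- $N(v_1)\cap N^+(v_{t-1})=\emptyset$ (the standard rotation argument you mention), $N(v_1)\cap\{v_{\ell+2},\dots,v_{\ell+c-3}\}=\emptyset$, and $N^+(v_{t-1})\cap\{v_{\ell+2},\dots,v_{\ell+c-3}\}=\emptyset$, the latter two coming from routing through $v_\ell$ and the chords $v_\ell v_{t-2}$, $v_\ell v_{t-3}$ --- and counts inside $V(C)$ to get $\abs{N(v_1)}\leq 2k+c-(k+3)-(c-5)=k+2<\delta(G)$, a contradiction. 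Your steps (1)--(3) match the paper's setup ($t$ even, $N(v_1),N(v_{t-1})\subseteq V(C)$), but without the triangle on $v_{t-3}v_{t-2}$ there is no forbidden interval, the count only rules out $c\leq 5$, and the argument stalls; so the proposal as written does not prove the claim.
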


\begin{proof}
    Suppose otherwise, $t>2k$. Let $C$ denote  the cycle $v_1v_2\dots v_tv_1$.  Since $G$ is $\C^{odd}_{\geq 2k+1}$-free, $t$ is even and vertices $v_{t-3}$ and $v_{t-2}$ have no common neighbors in $G[V(G)\setminus V(C)]$. On the other hand, since every edge is in a $K_r$ there is a vertex $v_l$ incident with both $v_{t-3}$ and $v_{t-2}$.  Let us denote $c:=t-2k$. Since $G$ is $\C^{odd}_{\geq 2k+1}$-free, $c-1\leq l\leq 2k-4$ holds. We take indices modulo $t$ in this claim. 

    There is no $i$ such that $v_i$ is adjacent with $v_{t-1}$ and $v_{i+1}$ is adjacent with $v_{1}$. Since otherwise, the following cycle is odd with a length greater than $2k$,
    \[
        v_1v_2\dots v_iv_{t-1}v_{t-2} \dots v_{i+1}v_{1}.
    \]
    Note that, since $v_{t-1},v_1 \in T_1$ and from maximality of $t$, we have $N(v_1),N(v_{t-1})\subseteq V(C)$. Therefore we have $N(v_1)\cap N^+(v_{t-1})=\emptyset$. Where $N^+(v_{t-1})$ denotes the following set $\{v_{i+1}:v_i \in N^+(v_{t-1})\}$.

    There is no such $i$ such that both $l<i< l+c-2$ and $v_1v_i$ is an edge of $G$ hold. Since otherwise, one of the following cycles is an odd cycle longer than $2k$,
    \[
v_1v_2\dots v_l v_{t-2}v_{t-3}\dots v_{i}v_1 \mbox{~or~}
v_1v_2\dots v_l v_{t-3}v_{t-4}\dots v_{i}v_1.
    \]
 Similarly, there is no $i$ such that $l<i<l+c$ and $v_{t-1}v_i$ is an edge of $G$. Finally, we have 
 \[
N(v_1)\cap \{v_{l+2},\dots,v_{l+c-3}\}=\emptyset.
 \]
 Also, we have
  \[
N(v_1)\cap N^+(v_{t-1})=\emptyset
 \]
 and 
 \[
N^+(v_{t-1})\cap \{v_{l+2},\dots,v_{l+c-3}\}=\emptyset.
 \]
 Hence, we get
\[
\abs{N(v_1)}\leq \abs{\{v_1,v_2,\dots,v_{2k+c}\}\setminus N^+(v_{t-1})\setminus  \{v_{l+2},\dots,v_{l+c-3}\}}\leq 2k+c-(k+3)-(c-5)<\delta(G),  
\]
a contradiction.
\end{proof}

From Claims~\ref{claim:Cut_vertex} and \ref{claim:t<2k}, $G$ contains a $2$-connected block of size at most $2k$. By the induction hypothesis,  we see that graph $G$ contains  at most $\frac{n-1}{2k-1}\binom{2k}{r}$ edges. The equality is achieved if and only if every maximal $2$-connected component of $G$ is isomorphic to $K_{2k}$ and the number of the maximal $2$-connected components is  $\frac{n-1}{2k-1}$. 
\end{proof}

\section{Acknowledgements}
 The research of Gy\H{o}ri is partially supported by the National Research, Development and Innovation Office -- NKFIH, grant K116769, K132696 and SNN117879.  
 The research of Salia was supported by the Institute for Basic Science (IBS-R029-C4).
The research of Zhu was supported by program B for an outstanding Ph.D. candidate at Nanjing University.

\bibliography{Referances.bib}

  \textit{E-mail addresses:} \\
  E.~Gy\H{o}ri: \texttt{gyori.ervin@renyi.hu}\\
  Z.~He: \texttt{hz18@mails.tsinghua.edu.cn}\\
  J.~Lv: \texttt{lvzq19@mails.tsinghua.edu.cn}\\
  N.~Salia: \texttt{salianika@gmail.com}\\
  C.~Xiao: \texttt{xiaochuanqi@outlook.com}\\
  X.~Zhu: \texttt{ zhuxt@smail.nju.edu.cn}

\end{document}